\documentclass[11pt, english, twoside]{article}
\usepackage[margin=3cm]{geometry}
\usepackage{amsbsy,amsmath,amsthm,amssymb}

\usepackage{fourier}
\DeclareMathAlphabet{\mathcal}{OMS}{cmsy}{m}{n}

\usepackage[dvipsnames]{xcolor}
\usepackage{bm}
\usepackage{esint}
\usepackage{babel}
\usepackage[colorlinks=true, citecolor = blue]{hyperref}

\usepackage{indentfirst}
\usepackage{changepage, cases}	
\usepackage{setspace}
\usepackage{indentfirst}
\usepackage{graphicx}
\usepackage{calc}
\usepackage{orcidlink}
\usepackage{booktabs,multirow}
\usepackage{makecell}

\usepackage{fancyhdr}
\allowdisplaybreaks

\theoremstyle{plain}
\newtheorem{theorem}{Theorem}[section]

\newtheorem{proposition}[theorem]{Proposition}

\theoremstyle{remark}
\newtheorem{remark}[theorem]{Remark}

\theoremstyle{definition}

\newenvironment{proof of theorem 1.1}{{\noindent \em Proof of Theorem 1.1.}}{\hfill $\Box$\par}
\newenvironment{proof of theorem 1.2}{{\noindent \em Proof of Theorem 1.2.}}{\hfill $\Box$\par}

\DeclareSymbolFont{EulerExtension}{U}{euex}{m}{n}
\DeclareMathSymbol{\euintop}{\mathop} {EulerExtension}{"52}
\DeclareMathSymbol{\euointop}{\mathop} {EulerExtension}{"48}

\begin{document}
	\title{Product formulas for multivariate and matrix-variate confluent hypergeometric functions}
	\author{
		Min-Jie Luo \orcidlink{0000-0001-7433-4490} $^{\rm 1}$\thanks{Corresponding author.\vspace{3mm}\\ E-mail addresses:  
			\url{mathwinnie@live.com}, \url{mathwinnie@dhu.edu.com} (M.-J. Luo); 
			\url{15207010215@163.com} (S.-Y. Shen)}~, 
		Shu-Ying Shen $^{\rm 1}$} 
	\date{}
	\maketitle
	\begin{center}\small
		$^{1}$\emph{School of Mathematics and Statistics, Donghua University,\\ 
			Shanghai 201620, People's Republic of China.}
	\end{center}
	
	
	\begin{abstract}
		In this note, we revisit one of Erd\'{e}lyi's product formulas for the univariate confluent hypergeometric function ${}_{1}F_{1}$ and extend it to the multivariate confluent hypergeometric functions $\Phi_2^{(k)}$, $\Psi_2^{(k)}$, as well as to a certain matrix-variate confluent hypergeometric function. A useful connection related to fractional calculus is also given. \\
		
		\noindent\textbf{Keywords}: 
		confluent hypergeometric function; 
		multivariate Whittaker function; 
		product formula; 
		zonal polynomial.  
		\\
		
		\noindent\textbf{Mathematics Subject Classification (2020)}:
		33C15; 
		33C65. 
	\end{abstract}

	
	\section{Introduction}\label{Section1}

In the theory of special functions, a recurring question is the following: given two special functions of the same (or different) type (allowing, of course, different parameter and variable values), is it possible to represent their product as an integral of another function of the same (or different) type with respect to a suitable measure? Formulas possessing the above features are commonly referred to as \emph{product formulas}. Over a century of continuous investigation, numerous mathematicians have devoted their efforts to this problem, including Watson (\cite{Watson-1938}, \cite{Watson-1939}), Erd\'{e}lyi (\cite{Erdelyi-1939}), Carlitz (\cite{Carlitz-1962}), Chatterjea (\cite{Chatterjea-1963}, \cite{Chatterjea-1964}), Srivastava and Panda (\cite{Srivastava-Panda-1976}), Koornwinder (\cite{Koornwinder-Schwartz-1997}), Beals and Kannai (\cite{Beals-Kannai-2008}), Veestraeten (\cite{Veestraeten-2017a}, \cite{Veestraeten-2017b}), and Amri (\cite{Amri-2025}), among others.

The aim of this short note is to revisit a simple yet fascinating integral from Erd\'{e}lyi's 1939 paper \cite{Erdelyi-1939} and to consider its two classes of higher-dimensional generalizations. It should be emphasized that when extending to higher dimensions, various alternatives arise, some of which deserve special attention, as the computational techniques involved are related to recent developments.

The confluent hypergeometric function ${}_{1}F_{1}$ is defined by
\begin{equation}\label{Def-1F1}
	{}_{1}F_{1}\left[\begin{matrix}
		a\\
		c
	\end{matrix};x\right]:=\sum_{n=0}^{\infty}\frac{(a)_n}{(c)_n}\frac{x^n}{n!}, 
\end{equation}
where $a\in\mathbb{C}$, $c\in\mathbb{C}\setminus\mathbb{Z}_{\leq 0}$ and $x\in\mathbb{C}$. Its Euler-type integral representation is given by
\begin{equation}\label{IntegralR-1F1}
	{}_{1}F_{1}\left[\begin{matrix}
		a\\
		c
	\end{matrix};x\right]=\int_0^1 \mathrm{e}^{xu}\mathrm{d}\mu_{a,c-a}(u), 
\end{equation}
where $x\in\mathbb{C}$ and 
\begin{equation}\label{Def-DirichletM-1}
\mathrm{d}\mu_{a,c-a}(u)=\frac{\Gamma(c)}{\Gamma(a)\Gamma(c-a)}u^{a-1}(1-u)^{c-a-1}\mathrm{d}u
\quad (\Re(c)>\Re(a)>0).
\end{equation}

In our notation, the formula we intend to study can be equivalently expressed in the following form. 
\begin{proposition}[{Erd\'{e}lyi \cite{Erdelyi-1939}}]\label{Prop-Erdelyi1939}
	Let $x,y\in\mathbb{C}$, and let $\Re(c)>\Re(a)>0$, $\Re(d)>\Re(b)>0$, and $\Re(g)>\Re(h)>0$. We have 
	\begin{equation}\label{Prop-Erdelyi1939-1}
		{}_{1}F_{1}\left[\begin{matrix}
			a\\
			c
		\end{matrix};x\right]
		{}_{1}F_{1}\left[\begin{matrix}
			b\\
			d
		\end{matrix};y\right]=\int_0^1\int_0^1\int_0^1
		{}_{1}F_{1}\left[\begin{matrix}
			g\\
			h
		\end{matrix};utx+vty\right]
		\mathrm{d}\mu_{a,c-a}(u)
		\mathrm{d}\mu_{b,d-b}(v)
		\mathrm{d}\mu_{h,g-h}(t).
	\end{equation}
\end{proposition}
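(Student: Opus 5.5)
The plan is to do the $t$-integration first and show that it turns the inner ${}_1F_1$ into an exponential; the remaining $u$- and $v$-integrals then split into two copies of the Euler representation \eqref{IntegralR-1F1}. Note the deliberate swap of parameters: the inner function has numerator $g$ and denominator $h$, while the $t$-measure is $\mathrm{d}\mu_{h,g-h}$. Its moments should therefore cancel the Pochhammer ratio $(g)_n/(h)_n$ exactly.

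\emph{Step 1 (moments).} For $\Re(g)>\Re(h)>0$ and $n\in\mathbb{Z}_{\ge0}$, the Beta integral and $\Gamma(s+n)=\Gamma(s)(s)_n$ give
\begin{equation*}
\int_0^1 t^n\,\mathrm{d}\mu_{h,g-h}(t)=\frac{\Gamma(g)}{\Gamma(h)\Gamma(g-h)}\cdot\frac{\Gamma(h+n)\Gamma(g-h)}{\Gamma(g+n)}=\frac{(h)_n}{(g)_n}.
\end{equation*}

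\emph{Step 2 (the $t$-integral).} Fix $u,v\in[0,1]$ and put $z=ux+vy$. Expand ${}_1F_1\!\left[\begin{matrix} g\\ h\end{matrix};tz\right]$ by \eqref{Def-1F1} and integrate term by term in $t$. By Step 1,
\begin{equation*}
\int_0^1 {}_1F_1\!\left[\begin{matrix} g\\ h\end{matrix};tz\right]\mathrm{d}\mu_{h,g-h}(t)=\sum_{n\ge0}\frac{(g)_n}{(h)_n}\frac{(h)_n}{(g)_n}\frac{z^n}{n!}=\mathrm{e}^{ux+vy}.
\end{equation*}
Here $h\notin\mathbb{Z}_{\le0}$ holds because $\Re(h)>0$.

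\emph{Step 3 (the $u$- and $v$-integrals).} By Fubini, integrate the resulting function $\mathrm{e}^{ux}\mathrm{e}^{vy}$ against $\mathrm{d}\mu_{a,c-a}(u)\,\mathrm{d}\mu_{b,d-b}(v)$. This factorises, and \eqref{IntegralR-1F1} applied twice gives the left-hand side of \eqref{Prop-Erdelyi1939-1}.

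The only step needing care is justifying the interchanges of sum and integral, and the use of Fubini on $[0,1]^3$. The hypotheses $\Re(c)>\Re(a)>0$, etc., make each measure a complex measure of finite total variation, since its density is $O(u^{\Re a-1}(1-u)^{\Re(c-a)-1})$, which is integrable.

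For the term-by-term integration in Step 2, with $|tz|\le |x|+|y|=:R$, the $n$-th term is bounded in absolute value by $\bigl|(g)_n/(h)_n\bigr|R^n/n!$. This bound is independent of $(u,v,t)$ and summable, since $|(g)_n/(h)_n|$ grows at most polynomially in $n$. Dominated convergence (or the Weierstrass M-test combined with finiteness of the total variation of the product measure) therefore justifies every interchange, and the integrand of \eqref{Prop-Erdelyi1939-1} is bounded and hence absolutely integrable. This completes the proof.
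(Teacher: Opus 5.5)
Your proof is correct and is essentially the paper's second proof. Your Step 2 establishes exactly the identity \eqref{IntegralR-e}, using the same beta moments the paper uses in its first proof, and the Euler representation \eqref{IntegralR-1F1} then gives the product; your dominated-convergence argument also makes rigorous the interchange of sum and integral that the paper only asserts.
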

In his paper, Erd\'{e}lyi omitted the proof. In this note, for the sake of completeness, we sketch two proofs in Section \ref{Section2}, both of which are concise, though the second is more illuminating. 

Just as Erd\'{e}lyi's starting point for studying confluent hypergeometric functions was to regard them as a generalization of Watson's product formula for Laguerre polynomials \cite{Watson-1938}, our interest in this work also stems from our recent studies on multivariate Laguerre polynomials (\cite{Luo-Raina-2026}, \cite{Guo-Luo-Raina-Wang-2026}).

In Section \ref{Section3}, we extend \eqref{Prop-Erdelyi1939-1} to the following two types of multivariate confluent hypergeometric functions (see, for example, \cite[pp. 34--35]{Srivastava-Karlsson-Book-1985} and \cite[pp. 62--63]{Srivastava-Manocha-Book-1984}):  
\begin{equation}\label{ConfluentLauricella-Phi}
	\Phi_2^{(k)}\big[\mathbf{b};c;\mathbf{x}\big]
	:=\sum_{\mathbf{m}=\mathbf{0}}^{\infty}
	\frac{(b_1)_{m_1}\cdots (b_k)_{m_k}}{(c)_{\langle\mathbf{m}\rangle}}\frac{\mathbf{x}^{\mathbf{m}}}{\mathbf{m}!}
\end{equation}
and 
\begin{equation}\label{ConfluentLauricella-Psi}
	\Psi_2^{(k)}\big[a;\mathbf{c};\mathbf{x}\big]
	:=\sum_{\mathbf{m}=\mathbf{0}}^{\infty}
	\frac{(a)_{\langle\mathbf{m}\rangle}}{(c_1)_{m_1}\cdots (c_k)_{m_k}}\frac{\mathbf{x}^{\mathbf{m}}}{\mathbf{m}!},
\end{equation}
where $a,b_1,\cdots,b_k\in\mathbb{C}$, $c,c_1,\cdots,c_k\in\mathbb{C}\setminus\mathbb{Z}_{\leq 0}$ and $\mathbf{x}\in\mathbb{C}^k$. 
Here, we continue to use the notation from our previous work \cite{Guo-Luo-Raina-Wang-2026}. Boldface letters always denote vectors of dimension $k$; for instance, $\mathbf{n}=(n_1,\cdots,n_k)\in\mathbb{Z}_{\geq0}^k$ and $\mathbf{x}=(x_1,\cdots,x_k)\in\mathbb{C}^k$. For any $\mathbf{x},\mathbf{y}\in\mathbb{C}^k$, we let $\langle \mathbf{x}\rangle:=x_1+\cdots+x_k$, $\mathbf{x}\circ\mathbf{y}:=(x_1 y_1,\cdots, x_k y_k)$, $\mathbf{x}/\mathbf{y}:=(x_1/y_1,\cdots, x_k/y_k)$ and
$\mathbf{x}^{\mathbf{n}}:=x_1^{n_1}\cdots x_k^{n_k}$. For any  $\mathbf{n}\in\mathbb{Z}_{\geq0}^k$, we write $\mathbf{n}!=n_1!\cdots n_k!$. In addition, 
\[
\int_{[a,b]^k}f(\mathbf{x})\mathrm{d}\mathbf{x}~~~\text{means}~~~
\int_a^b\cdots\int_a^b f(x_1,\cdots,x_k)\mathrm{d}x_1\cdots\mathrm{d}x_k, 
\]
\[
\sum_{\mathbf{n}=\mathbf{0}}^{\infty}\Omega(\mathbf{n})~~~\text{means}~~~
\sum_{n_1,\cdots,n_k=0}^{\infty}\Omega(n_1,\cdots,n_k),
\]
and
\[
\Re(\mathbf{a})>\mathbf{0}~~~\text{means}~~~
\Re(a_i)>0~(i=1,\cdots,k). 
\]

In Section \ref{Section4}, we extend \eqref{Prop-Erdelyi1939-1} to confluent hypergeometric functions of matrix argument. Let 
\[
\mathrm{Sym}_m(\mathbb{R}):=\left\{X\in\mathbb{R}^{m\times m};X^T=X\right\}.
\] 
For any $X\in \mathrm{Sym}_m(\mathbb{R})$, we write $X>O$ if $X$ is positive definite, and $X\geq O$ if $X$ is positive semi-definite, where $O$ denotes the $m\times m$ zero matrix. As usual, $\mathrm{tr}(X)=x_{11}+\cdots+x_{mm}$ is the trace of $X$. In terms of zonal polynomials, the confluent hypergeometric function of one matrix argument can be defined by \cite[p. 770, Eq. (35.6.1)]{NIST Handbook} 
\begin{equation}
	{}_{1}F_{1}\left[\begin{matrix}
		a\\
		c
	\end{matrix};X\right]=\sum_{\ell=0}^{\infty}\frac{1}{\ell!}\sum_{\langle\kappa\rangle=\ell}\frac{[a]_{\kappa}}{[c]_{\kappa}}C_{\kappa}(X), 
\end{equation}
where $\sum_{\kappa}$ denotes summation over all partitions $\kappa=(\ell_1,\cdots,\ell_m)\in\mathbb{Z}_{\geq0}^m$ with $\langle\kappa\rangle\equiv\ell_1+\cdots+\ell_m=\ell$, $[a]_{\kappa}$ is the partitional shifted factorial given by \cite[p. 769, Eq. (35.4.1)]{NIST Handbook}
\[
[a]_{\kappa}=\prod_{j=1}^{m}\left(a-\tfrac{1}{2}(j-1)\right)_{k_j}, 
\] 
and $C_{\kappa}(X)$ is the zonal polynomials \cite[Chapter 7]{Muirhead-Book-1982}. For the sake of brevity, we do not intend to give a full introduction to the definition of zonal polynomials, since only very limited facts about them will be needed in this note.

\section{Two proofs of Proposition \ref{Prop-Erdelyi1939}}\label{Section2}

For the first proof, we make use of the fact that the integrand ${}_{1}F_{1}$ can be expressed, by means of its series representation \eqref{Def-1F1} and the binomial theorem, as follows:
\begin{align*}
	{}_{1}F_{1}\left[\begin{matrix}
		g\\
		h
	\end{matrix};utx+vty\right]
	&=\sum_{n=0}^{\infty}\frac{(g)_n}{(h)_n}\frac{t^n}{n!}
	(ux+vy)^n\\
	&=\sum_{n=0}^{\infty}\frac{(g)_n}{(h)_n}\frac{t^n}{n!}
	\sum_{m=0}^{n}\frac{n!}{m!(n-m)!}u^m x^m v^{n-m}y^{n-m}\\
	&=\sum_{n=0}^{\infty}\sum_{m=0}^{\infty}\frac{(g)_{n+m}}{(h)_{n+m}}\frac{t^{n+m}}{n!m!}
	u^m x^m v^n y^n.
\end{align*}
The result then follows upon interchanging the order of summation and integration (which is permissible under the conditions stated in the proposition) and evaluating three familiar beta integrals of the form
\[
\int_{0}^{1}u^m\mathrm{d}\mu_{a,c-a}(u)=\frac{(a)_m}{(c)_m}\quad (\Re(c)>\Re(a)>0, ~ m\in\mathbb{Z}_{\geq0}).
\] 

The second proof, which is the more instructive one, depends on the formula: 
\begin{equation}\label{IntegralR-e}
	\mathrm{e}^{z}=\int_0^1
	{}_{1}F_{1}\left[\begin{matrix}
		g\\
		h
	\end{matrix};zt\right]\mathrm{d}\mu_{h,g-h}(t)
	\quad (\Re(g)>\Re(h)>0, ~ z\in\mathbb{C}).
\end{equation}
In view of \eqref{IntegralR-1F1} we have  
\[
{}_{1}F_{1}\left[\begin{matrix}
	a\\
	c
\end{matrix};x\right]
{}_{1}F_{1}\left[\begin{matrix}
	b\\
	d
\end{matrix};y\right]
=\int_0^1\int_0^1
\mathrm{e}^{ux+vy}
\mathrm{d}\mu_{a,c-a}(u)
\mathrm{d}\mu_{b,d-b}(v),
\]
which does not yet contain the ``coupling terms'' we expect. However, by interpreting $\mathrm{e}^{ux+vy}$ via \eqref{IntegralR-e}, we obtain the desired result again. 

We wish to mention that the second approach is more helpful in revealing the structure of the integral we seek. This will be demonstrated clearly in Section \ref{Section4}.

\section{Product formulas for $\Phi_2^{(k)}$ and $\Psi_2^{(k)}$}\label{Section3}

Let us define the Diriculet measure by \cite[p. 64, Definition 4.4-1]{Carlson-Book-1977}
\[
	\mathrm{d}\mu_{(\bm{\lambda},\alpha)}(\mathbf{s})
	=\frac{\Gamma(\langle\bm{\lambda}\rangle+\alpha)}{\Gamma(\lambda_1)\cdots\Gamma(\lambda_k)\Gamma(\alpha)}\mathbf{s}^{\bm{\lambda}-1}(1-\langle\mathbf{s}\rangle)^{\alpha-1}\mathrm{d}\mathbf{s},
\]
where $\Re(\bm{\lambda})>\mathbf{0}$ and $\Re(\alpha)>0$. Obviously, 
\[
\int_{E_k}\mathbf{s}^{\mathbf{m}}\mathrm{d}\mu_{(\bm{\lambda},\alpha)}(\mathbf{s})=\frac{(\lambda_1)_{m_1}\cdots(\lambda_k)_{m_k}}{(\alpha+\langle\bm{\lambda}\rangle)_{\langle\mathbf{m}\rangle}}, 
\]
where $E_k$ is a standard simplex in $\mathbb{R}^k$. 

\begin{proposition}\label{Prop-Phi}
	Let $\Re(\mathbf{a})>\mathbf{0}$,   
	$\Re(c)>\Re(\langle\mathbf{a}\rangle)$, 
	$\Re(\mathbf{b})>\mathbf{0}$, 
	$\Re(d)>\Re(\langle\mathbf{b}\rangle)$. 
	Then we have
	\begin{align}\label{Prop-Phi-1}
		&\Phi_2^{(k)}\big[\mathbf{a};c;\mathbf{x}\big]
		\Phi_2^{(k)}\big[\mathbf{b};d;\mathbf{y}\big]\notag\\
		&\hspace{0.5cm}=\int_{E_k\times E_k}\int_{0}^{1}
		{}_{1}F_{1}\left[\begin{matrix}
			g\\
			h
		\end{matrix};t\langle\mathbf{x}\circ\mathbf{u}+\mathbf{y}\circ\mathbf{v}\rangle\right]
		\mathrm{d}\mu_{(\mathbf{a},c-\langle\mathbf{a}\rangle)}(\mathbf{u})
		\mathrm{d}\mu_{(\mathbf{b},d-\langle\mathbf{b}\rangle)}(\mathbf{v})
		\mathrm{d}\mu_{h,g-h}(t)
	\end{align}
	provided that $\Re(g)>\Re(h)>0$, and 
	\begin{align}\label{Prop-Phi-2}
		&\Phi_2^{(k)}\big[\mathbf{a};c;\mathbf{x}\big]
		\Phi_2^{(k)}\big[\mathbf{b};d;\mathbf{y}\big]\notag\\
		&\hspace{0.5cm}=\int_{E_k\times E_k\times E_k}
		\Psi_2^{(k)}\big[g;\mathbf{h};(\mathbf{x}\circ\mathbf{u}+\mathbf{y}\circ\mathbf{v})\circ\mathbf{t}\big]
		\mathrm{d}\mu_{(\mathbf{a},c-\langle\mathbf{a}\rangle)}(\mathbf{u})
		\mathrm{d}\mu_{(\mathbf{b},d-\langle\mathbf{b}\rangle)}(\mathbf{v})
		\mathrm{d}\mu_{(\mathbf{h},g-\langle\mathbf{h}\rangle)}(\mathbf{t})
	\end{align}
	provided that 
	$\Re(\mathbf{h})>\mathbf{0}$ and $\Re(g)>\Re(\langle\mathbf{h}\rangle)$.
\end{proposition}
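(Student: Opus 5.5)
The plan is to follow the second proof of Proposition~\ref{Prop-Erdelyi1939}: first write each $\Phi_2^{(k)}$ as a Dirichlet average of an exponential, then expand the resulting exponential as an average of ${}_1F_1$ or of $\Psi_2^{(k)}$.

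\textbf{Step 1: Euler-type representation of $\Phi_2^{(k)}$.} For $\Re(\mathbf{a})>\mathbf{0}$ and $\Re(c)>\Re(\langle\mathbf{a}\rangle)$ we show
\[
\Phi_2^{(k)}\big[\mathbf{a};c;\mathbf{x}\big]=\int_{E_k}\mathrm{e}^{\langle\mathbf{x}\circ\mathbf{u}\rangle}\,\mathrm{d}\mu_{(\mathbf{a},c-\langle\mathbf{a}\rangle)}(\mathbf{u}).
\]
Expand $\mathrm{e}^{\langle\mathbf{x}\circ\mathbf{u}\rangle}=\sum_{\mathbf{m}}\mathbf{x}^{\mathbf{m}}\mathbf{u}^{\mathbf{m}}/\mathbf{m}!$ and integrate term by term. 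The Dirichlet moment formula gives $(a_1)_{m_1}\cdots(a_k)_{m_k}/(c)_{\langle\mathbf{m}\rangle}$, which is the coefficient in \eqref{ConfluentLauricella-Phi}. The interchange is justified because $\mathbf{u}\in E_k$ is bounded, so the series converges uniformly and is dominated by $\mathrm{e}^{\langle|\mathbf{x}|\rangle}$ times the total variation of the (complex) Dirichlet measure, which is finite under the stated conditions.

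Multiplying the representations for $\Phi_2^{(k)}[\mathbf{a};c;\mathbf{x}]$ and $\Phi_2^{(k)}[\mathbf{b};d;\mathbf{y}]$ and applying Fubini gives
\[
\Phi_2^{(k)}\big[\mathbf{a};c;\mathbf{x}\big]\Phi_2^{(k)}\big[\mathbf{b};d;\mathbf{y}\big]=\int_{E_k\times E_k}\mathrm{e}^{\langle\mathbf{x}\circ\mathbf{u}+\mathbf{y}\circ\mathbf{v}\rangle}\,\mathrm{d}\mu_{(\mathbf{a},c-\langle\mathbf{a}\rangle)}(\mathbf{u})\,\mathrm{d}\mu_{(\mathbf{b},d-\langle\mathbf{b}\rangle)}(\mathbf{v}).
\]

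\textbf{Step 2a: proof of \eqref{Prop-Phi-1}.} Apply \eqref{IntegralR-e} with the scalar $z=\langle\mathbf{x}\circ\mathbf{u}+\mathbf{y}\circ\mathbf{v}\rangle$ and then Fubini over the triple product. Fubini is valid because ${}_1F_1$ is entire. Consequently the integrand is continuous, hence bounded, on the compact set $E_k\times E_k\times[0,1]$, and the measures have finite total variation.

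\textbf{Step 2b: proof of \eqref{Prop-Phi-2}.} Here we need a $k$-variable analogue of \eqref{IntegralR-e}: for $\Re(\mathbf{h})>\mathbf{0}$, $\Re(g)>\Re(\langle\mathbf{h}\rangle)$ and $\mathbf{z}\in\mathbb{C}^k$,
\[
\mathrm{e}^{\langle\mathbf{z}\rangle}=\int_{E_k}\Psi_2^{(k)}\big[g;\mathbf{h};\mathbf{z}\circ\mathbf{t}\big]\,\mathrm{d}\mu_{(\mathbf{h},g-\langle\mathbf{h}\rangle)}(\mathbf{t}).
\]
To prove it, expand $\Psi_2^{(k)}$ by \eqref{ConfluentLauricella-Psi}. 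The general term is
\[
\frac{(g)_{\langle\mathbf{m}\rangle}}{(h_1)_{m_1}\cdots(h_k)_{m_k}}\frac{\mathbf{z}^{\mathbf{m}}\mathbf{t}^{\mathbf{m}}}{\mathbf{m}!}.
\]
The Dirichlet moment formula, with $\bm{\lambda}=\mathbf{h}$ and $\alpha+\langle\bm{\lambda}\rangle=g$, contributes exactly the reciprocal factor $(h_1)_{m_1}\cdots(h_k)_{m_k}/(g)_{\langle\mathbf{m}\rangle}$. What remains is $\sum_{\mathbf{m}}\mathbf{z}^{\mathbf{m}}/\mathbf{m}!=\mathrm{e}^{\langle\mathbf{z}\rangle}$. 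Setting $\mathbf{z}=\mathbf{x}\circ\mathbf{u}+\mathbf{y}\circ\mathbf{v}$ and inserting this into the product formula from Step 1 gives \eqref{Prop-Phi-2}, after one more Fubini step.

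\textbf{Main obstacle.} The only genuinely new ingredient is the identity in Step 2b, specifically justifying term-by-term integration of $\Psi_2^{(k)}$. The series $\Psi_2^{(k)}$ is entire in $\mathbf{z}$, since $(g)_{\langle\mathbf{m}\rangle}\le(|g|)_{\langle\mathbf{m}\rangle}$ grows at most like $\langle\mathbf{m}\rangle!\,C^{\langle\mathbf{m}\rangle}$ while the denominators decay factorially in each $m_i$. To bound it, I would dominate it termwise by its majorant with $|g|$ and $|\mathbf{z}|$, and control the denominators from below. This gives uniform convergence on $E_k$, which makes the interchange legitimate. Everything else reduces to Dirichlet moment evaluations.
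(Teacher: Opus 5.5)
Your proposal is correct and follows essentially the same route as the paper. The paper writes the product as a double Dirichlet average of $\mathrm{e}^{\langle\mathbf{x}\circ\mathbf{u}\rangle+\langle\mathbf{y}\circ\mathbf{v}\rangle}$ and then inserts \eqref{IntegralR-e}, respectively the $\Psi_2^{(k)}$-average formula \eqref{Prop-Phi-Proof-2}, exactly as you do. The only difference is that the paper cites Erd\'elyi and Exton for these two ingredients, while you verify them directly by term-by-term integration and also supply the Fubini and domination justifications.
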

\begin{proof}
	It is known that \cite[p. 449, Eq. (8.5)]{Erdelyi-1937}
	\[
	\Phi_{2}^{(k)}\big[\mathbf{a};c;\mathbf{x}\big]=\int_{E_k}
	\mathrm{e}^{\langle\mathbf{x}\circ\mathbf{u}\rangle}
	\mathrm{d}\mu_{(\mathbf{a},c-\langle\mathbf{a}\rangle)}(\mathbf{u}),
	\]
	where $\Re(\mathbf{a})>\mathbf{0}$ and $\Re(c)>\Re(\langle\mathbf{a}\rangle)$. Thus, by adopting the method presented in the second proof of Proposition \ref{Prop-Erdelyi1939}, we first write 
	\begin{equation}\label{Prop-Phi-Proof-1}
	\Phi_{2}^{(k)}\big[\mathbf{a};c;\mathbf{x}\big]
	\Phi_{2}^{(k)}\big[\mathbf{b};d;\mathbf{y}\big]
	=\int_{E_k}\int_{E_k}
	\mathrm{e}^{\langle\mathbf{x}\circ\mathbf{u}\rangle+\langle\mathbf{y}\circ\mathbf{v}\rangle}
	\mathrm{d}\mu_{(\mathbf{a},c-\langle\mathbf{a}\rangle)}(\mathbf{u})
	\mathrm{d}\mu_{(\mathbf{b},d-\langle\mathbf{b}\rangle)}(\mathbf{v}),
	\end{equation}
	and then the coupling terms in \eqref{Prop-Phi-1} and \eqref{Prop-Phi-2} are created by using \eqref{IntegralR-e} and \cite[p. 60, Eq. (2.7.10)]{Exton-Book-1976}
	\begin{equation}\label{Prop-Phi-Proof-2}
	\mathrm{e}^{\langle\mathbf{z}\rangle}=\int_{E_k}
	\Psi_2^{(k)}\big[g;\mathbf{h};\mathbf{z}\circ\mathbf{t}\big]
	\mathrm{d}\mu_{(\mathbf{h},g-\langle\mathbf{h}\rangle)}(\mathbf{t}),
	\end{equation}
	respectively. Note that the formula \eqref{Prop-Phi-Proof-2} is a generalization of \eqref{IntegralR-e} and can be easily verified by expanding the $\Psi_2^{(k)}$ function using its series representation and then integrating term by term.
\end{proof}
\begin{remark}
	The proposition can also be proved alternatively by expanding the integrand in series and integrating the resulting expression term by term.
\end{remark}

In order to find a product formula whose integrand is still a $\Phi_{2}^{(k)}$ function, the key step is to establish a formula of the form \eqref{Prop-Phi-Proof-2} for $\Phi_2^{(k)}$. 

Very recently, Oshima \cite{Oshima-2024} introduced a pair of integral transformations $K_\mathbf{x}^{\mu,\bm{\lambda}}$ and $L_{\mathbf{x}}^{\mu,\bm{\lambda}}$ when studying multivariable hypergeometric functions. They are closely related to the famous Riemann-Liouville fractional integral operators. A crucial observation is the following relation \cite[p. 556]{Oshima-2024}: 
\[
\int_{\mathcal{L}^k}\mathbf{t}^{-\mathbf{a}-\mathbf{1}}(1-\langle\mathbf{t}\rangle)^{-c}\mathrm{d}\mathbf{t}
=\frac{(2\pi\mathrm{i})^k\Gamma(\langle\mathbf{a}\rangle+c)}{\Gamma(a_1+1)\cdots\Gamma(a_k+1)\Gamma(c)}\quad (\Re(\mathbf{a})>\mathbf{0}, ~ \Re(c)>0), 
\]
where $\mathcal{L}^k=\mathcal{L}\times\cdots \times\mathcal{L}$ and $\mathcal{L}$ is the line from $\frac{1}{k+1}-\mathrm{i}\infty$ to $\frac{1}{k+1}+\mathrm{i}\infty$. For convenience, we write 
\[
\mathrm{d}\widetilde{\mu}_{(\mathbf{a},c)}(\mathbf{t})
:=
\frac{\Gamma(a_1+1)\cdots\Gamma(a_k+1)\Gamma(c)}{(2\pi\mathrm{i})^k\Gamma(\langle\mathbf{a}\rangle+c)}
\mathbf{t}^{-\mathbf{a}-\mathbf{1}}(1-\langle\mathbf{t}\rangle)^{-c}\mathrm{d}\mathbf{t}.
\]
We have
\begin{equation}\label{MomentIntegral-ContourV}
	\int_{\mathcal{L}^k}
	\mathbf{t}^{-\mathbf{m}}\mathrm{d}\widetilde{\mu}_{(\mathbf{a},c)}(\mathbf{t})=\frac{(\langle\mathbf{a}\rangle+c)_{\langle\mathbf{m}\rangle}}{(a_1+1)_{m_1}\cdots(a_k+1)_{m_k}}.
\end{equation}

\begin{proposition}\label{Prop-Phi-v}
	Let 
	$\Re(\mathbf{a})>\mathbf{0}$,
	$\Re(c)>\Re(\langle\mathbf{a}\rangle)$, 
	$\Re(\mathbf{b})>\mathbf{0}$,
	$\Re(d)>\Re(\langle\mathbf{b}\rangle)$,
	$\Re(\mathbf{h}-\mathbf{1})>0$, and 
	$\Re(g+k)>\Re(\langle\mathbf{h}\rangle)$. 
	Then we have
	\begin{align}\label{Prop-Phi-v-1}
		&\Phi_2^{(k)}\big[\mathbf{a};c;\mathbf{x}\big]
		\Phi_2^{(k)}\big[\mathbf{b};d;\mathbf{y}\big]\notag\\
		&\hspace{0.5cm}=\int_{E_k\times E_k}\int_{\mathcal{L}^k}
		\Phi_2^{(k)}\left[\mathbf{h};g;\frac{\mathbf{x}\circ\mathbf{u}+\mathbf{y}\circ\mathbf{v}}{\mathbf{t}}\right]
		\mathrm{d}\mu_{(\mathbf{a},c-\langle\mathbf{a}\rangle)}(\mathbf{u})
		\mathrm{d}\mu_{(\mathbf{b},d-\langle\mathbf{b}\rangle)}(\mathbf{v})
		\mathrm{d}\widetilde{\mu}_{(\mathbf{h}-\mathbf{1},g+k-\langle\mathbf{h}\rangle)}(\mathbf{t}).
	\end{align}
\end{proposition}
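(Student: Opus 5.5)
Following the second proof of Proposition \ref{Prop-Erdelyi1939} and the proof of Proposition \ref{Prop-Phi}, start from \eqref{Prop-Phi-Proof-1}, which expresses the product as an integral of $\mathrm{e}^{\langle\mathbf{x}\circ\mathbf{u}\rangle+\langle\mathbf{y}\circ\mathbf{v}\rangle}$ over $E_k\times E_k$ against the two Dirichlet measures. It then suffices to prove the analogue of \eqref{Prop-Phi-Proof-2} in which $\Phi_2^{(k)}$ is the integrand:
\begin{equation*}
\mathrm{e}^{\langle\mathbf{z}\rangle}=\int_{\mathcal{L}^k}\Phi_2^{(k)}\left[\mathbf{h};g;\frac{\mathbf{z}}{\mathbf{t}}\right]\mathrm{d}\widetilde{\mu}_{(\mathbf{h}-\mathbf{1},g+k-\langle\mathbf{h}\rangle)}(\mathbf{t}),\qquad \mathbf{z}\in\mathbb{C}^k .
\end{equation*}
Setting $\mathbf{z}=\mathbf{x}\circ\mathbf{u}+\mathbf{y}\circ\mathbf{v}$ and inserting this into \eqref{Prop-Phi-Proof-1} gives \eqref{Prop-Phi-v-1}.

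To verify the auxiliary formula, expand $\Phi_2^{(k)}[\mathbf{h};g;\mathbf{z}/\mathbf{t}]$ by its series \eqref{ConfluentLauricella-Phi}, so the general term is $\frac{(h_1)_{m_1}\cdots(h_k)_{m_k}}{(g)_{\langle\mathbf{m}\rangle}}\frac{\mathbf{z}^{\mathbf{m}}}{\mathbf{m}!}\mathbf{t}^{-\mathbf{m}}$. Integrate term by term using \eqref{MomentIntegral-ContourV} with parameters $\mathbf{a}=\mathbf{h}-\mathbf{1}$ and $c=g+k-\langle\mathbf{h}\rangle$. Then $\langle\mathbf{a}\rangle+c=g$ and $a_i+1=h_i$, so the moment equals $(g)_{\langle\mathbf{m}\rangle}/\prod_i(h_i)_{m_i}$. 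This cancels the coefficient exactly and leaves $\sum_{\mathbf{m}}\mathbf{z}^{\mathbf{m}}/\mathbf{m}!=\mathrm{e}^{\langle\mathbf{z}\rangle}$. The hypotheses $\Re(\mathbf{h}-\mathbf{1})>\mathbf{0}$ and $\Re(g+k)>\Re(\langle\mathbf{h}\rangle)$ are precisely those required for \eqref{MomentIntegral-ContourV} with these parameters.

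The main obstacle is justifying the interchange of summation and integration over the unbounded contour $\mathcal{L}^k$. On $\mathcal{L}$ we have $\Re(t_j)=\frac{1}{k+1}$, so $|t_j|\ge\frac{1}{k+1}$ and $|\mathbf{t}^{-\mathbf{m}}|\le (k+1)^{\langle\mathbf{m}\rangle}$. In addition $|1-\langle\mathbf{t}\rangle|\ge \Re(1-\langle\mathbf{t}\rangle)=\frac{1}{k+1}>0$, so $1-\langle\mathbf{t}\rangle$ stays away from zero. Bound the series termwise by
\begin{equation*}
\sum_{\mathbf{m}}\frac{\prod_i|(h_i)_{m_i}|}{|(g)_{\langle\mathbf{m}\rangle}|}\frac{((k+1)|\mathbf{z}|)^{\mathbf{m}}}{\mathbf{m}!}\cdot|\mathbf{t}^{-\mathbf{h}}|\,|1-\langle\mathbf{t}\rangle|^{-\Re(g+k-\langle\mathbf{h}\rangle)}\,\mathrm{e}^{\pi\sum_j|\Im(\cdot)|},
\end{equation*}
where the last factor absorbs the bounded effect of imaginary exponents through the arguments. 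The series in $\mathbf{m}$ converges because $\Phi_2^{(k)}$ is entire. For integrability in $\mathbf{t}$ I would first try the crude bounds $|t_j|^{-\Re h_j}$ and $|1-\langle\mathbf{t}\rangle|^{-\Re(\cdot)}$, but these may not give absolute integrability in every direction of $\mathcal{L}^k$. If so, fall back on the alternative: interpret the $\mathbf{t}$-integral as in Oshima \cite{Oshima-2024}, or deform each $\mathcal{L}$ to a Hankel-type loop around $0$ where the integrand decays, to justify \eqref{MomentIntegral-ContourV} uniformly on the partial sums. The final interchange with the compact integrals over $E_k\times E_k$ is routine by Fubini, since everything is continuous in $\mathbf{u},\mathbf{v}$ and the bounds are uniform for $\mathbf{z}$ in compact sets.
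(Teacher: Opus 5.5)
Your proposal is correct and follows the paper's proof: establish $\mathrm{e}^{\langle\mathbf{z}\rangle}=\int_{\mathcal{L}^k}\Phi_2^{(k)}[\mathbf{h};g;\mathbf{z}/\mathbf{t}]\,\mathrm{d}\widetilde{\mu}_{(\mathbf{h}-\mathbf{1},g+k-\langle\mathbf{h}\rangle)}(\mathbf{t})$ by termwise use of \eqref{MomentIntegral-ContourV}, then substitute it into \eqref{Prop-Phi-Proof-1}. Your hedge about integrability is unnecessary, and your crude bound already justifies the interchange with no contour deformation: $\Re(g+k-\langle\mathbf{h}\rangle)>0$ and $|1-\langle\mathbf{t}\rangle|\ge\frac{1}{k+1}$ make $(1-\langle\mathbf{t}\rangle)^{-(g+k-\langle\mathbf{h}\rangle)}$ bounded on $\mathcal{L}^k$, and $\prod_j|t_j|^{-\Re(h_j)}$ is integrable there because $\Re(h_j)>1$.
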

\begin{proof}
	Using \eqref{MomentIntegral-ContourV}, it is easy to verify that
	\begin{align}\label{Prop-Phi-v-Proof-1}
		\mathrm{e}^{\langle\mathbf{z}\rangle}=\int_{\mathcal{L}^k}
		\Phi_2^{(k)}\left[\mathbf{h};g;\frac{\mathbf{z}}{\mathbf{t}}\right]
		\mathrm{d}\widetilde{\mu}_{(\mathbf{h}-\mathbf{1},g+k-\langle\mathbf{h}\rangle)}(\mathbf{t}).
	\end{align}
	Substituting \eqref{Prop-Phi-v-Proof-1} into \eqref{Prop-Phi-Proof-1} yields the desired result \eqref{Prop-Phi-v-1}. 
\end{proof}

\begin{proposition}\label{Prop-Psi}
	Let 
	$\Re(\mathbf{c}-\mathbf{1})>\mathbf{0}$, 
	$\Re(a)+k>\Re(\langle\mathbf{c}\rangle)$,
	$\Re(\mathbf{d}-\mathbf{1})>\mathbf{0}$, and 
	$\Re(b)+k>\Re(\langle\mathbf{d}\rangle)$.
	Then we have
	\begin{align*}
		\Psi_2^{(k)}\big[a;\mathbf{c}; \mathbf{x}\big]\Psi_2^{(k)}\big[b;\mathbf{d}; \mathbf{y}\big]
		&=\int_{\mathcal{L}^k\times\mathcal{L}^k}\int_0^1
		{}_1F_1\left[\begin{matrix}g \\ h\end{matrix};
		t\left\langle \frac{\mathbf{x}}{\mathbf{u}} + \frac{\mathbf{y}}{\mathbf{w}} \right\rangle
		\right]\notag\\
		&\hspace{1.5cm}\cdot\mathrm{d}\widetilde{\mu}_{(\mathbf{c}-\mathbf{1},a+k-\langle\mathbf{c}\rangle)}(\mathbf{u})\,
		\mathrm{d}\widetilde{\mu}_{(\mathbf{d-\mathbf{1}},b+k-\langle\mathbf{d}\rangle)}(\mathbf{w})
		\mathrm{d}\mu_{h,g-h}(t)
	\end{align*}
	provided that $\Re(g)>\Re(h)>0$, and
	\begin{align}\label{Prop-Psi-2}
		\Psi_2^{(k)}\big[a;\mathbf{c}; \mathbf{x}\big]
		\Psi_2^{(k)}\big[b;\mathbf{d}; \mathbf{y}\big]
		&=\int_{\mathcal{L}^k\times\mathcal{L}^k}\int_{E_k}
		\Psi_2^{(k)}\left[g;\mathbf{h}; \frac{\mathbf{x}}{\mathbf{u}}\circ\mathbf{t} + \frac{\mathbf{y}}{\mathbf{w}}\circ\mathbf{t}\right]\notag\\
		&\hspace{1.5cm}\cdot\mathrm{d}\widetilde{\mu}_{(\mathbf{c}-\mathbf{1},a+k-\langle\mathbf{c}\rangle)}(\mathbf{u})
			\mathrm{d}\widetilde{\mu}_{(\mathbf{d}-\mathbf{1},b+k-\langle\mathbf{d}\rangle)}(\mathbf{w})
			\mathrm{d}\mu_{(\mathbf{h},g-\langle\mathbf{h}\rangle)}(\mathbf{t})
	\end{align}
	provided that $\Re(\mathbf{h})>\mathbf{0}$ and  $\Re(g)>\Re(\langle\mathbf{h}\rangle)$.
\end{proposition}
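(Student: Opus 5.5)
The plan mirrors the second proof of Proposition \ref{Prop-Erdelyi1939} and the proof of Proposition \ref{Prop-Phi}. First we need an exponential-type integral representation of $\Psi_2^{(k)}$ over $\mathcal{L}^k$, the analogue of Erd\'elyi's representation of $\Phi_2^{(k)}$:
\begin{equation*}
\Psi_2^{(k)}\big[a;\mathbf{c};\mathbf{x}\big]=\int_{\mathcal{L}^k}\mathrm{e}^{\langle\mathbf{x}/\mathbf{u}\rangle}\,\mathrm{d}\widetilde{\mu}_{(\mathbf{c}-\mathbf{1},a+k-\langle\mathbf{c}\rangle)}(\mathbf{u}).
\end{equation*}
To verify it, expand $\mathrm{e}^{\langle\mathbf{x}/\mathbf{u}\rangle}=\sum_{\mathbf{m}}\mathbf{x}^{\mathbf{m}}\mathbf{u}^{-\mathbf{m}}/\mathbf{m}!$ and integrate term by term using \eqref{MomentIntegral-ContourV} with $\mathbf{a}$ replaced by $\mathbf{c}-\mathbf{1}$ and $c$ by $a+k-\langle\mathbf{c}\rangle$. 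Then $\langle\mathbf{a}\rangle+c=a$ and $(a_i+1)_{m_i}=(c_i)_{m_i}$, so each moment is exactly $(a)_{\langle\mathbf{m}\rangle}/\big((c_1)_{m_1}\cdots(c_k)_{m_k}\big)$. The hypotheses $\Re(\mathbf{c}-\mathbf{1})>\mathbf{0}$ and $\Re(a)+k>\Re(\langle\mathbf{c}\rangle)$ are precisely those needed for \eqref{MomentIntegral-ContourV}.

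Multiplying the representations for $\Psi_2^{(k)}[a;\mathbf{c};\mathbf{x}]$ and $\Psi_2^{(k)}[b;\mathbf{d};\mathbf{y}]$ gives
\begin{equation*}
\Psi_2^{(k)}\big[a;\mathbf{c};\mathbf{x}\big]\Psi_2^{(k)}\big[b;\mathbf{d};\mathbf{y}\big]=\int_{\mathcal{L}^k\times\mathcal{L}^k}\mathrm{e}^{\langle\mathbf{x}/\mathbf{u}+\mathbf{y}/\mathbf{w}\rangle}\,\mathrm{d}\widetilde{\mu}_{(\mathbf{c}-\mathbf{1},a+k-\langle\mathbf{c}\rangle)}(\mathbf{u})\,\mathrm{d}\widetilde{\mu}_{(\mathbf{d}-\mathbf{1},b+k-\langle\mathbf{d}\rangle)}(\mathbf{w}).
\end{equation*}
We then create the coupling term by rewriting the exponential. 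For the first formula we apply \eqref{IntegralR-e} with $z=\langle\mathbf{x}/\mathbf{u}+\mathbf{y}/\mathbf{w}\rangle$. For \eqref{Prop-Psi-2} we apply \eqref{Prop-Phi-Proof-2} with $\mathbf{z}=\mathbf{x}/\mathbf{u}+\mathbf{y}/\mathbf{w}$, which produces $\Psi_2^{(k)}[g;\mathbf{h};(\mathbf{x}/\mathbf{u}+\mathbf{y}/\mathbf{w})\circ\mathbf{t}]$, the stated integrand. Finally, Fubini puts the resulting iterated integral in the stated form.

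The main obstacle is justifying the term-by-term integrations and the interchange of integrals on the unbounded contour $\mathcal{L}^k$. On $\mathcal{L}$ we have $\Re(u_i)=\frac{1}{k+1}>0$, so $|u_i|\ge\frac{1}{k+1}$ and $|1/u_i|\le k+1$. Hence $|\mathrm{e}^{\langle\mathbf{x}/\mathbf{u}\rangle}|$ and the series $\sum|\mathbf{x}|^{\mathbf{m}}|\mathbf{u}|^{-\mathbf{m}}/\mathbf{m}!$ are bounded uniformly by $\mathrm{e}^{(k+1)\langle|\mathbf{x}|\rangle}$. The density $|\mathbf{u}^{-\mathbf{c}}(1-\langle\mathbf{u}\rangle)^{a+k-\langle\mathbf{c}\rangle}\cdots|$ has to be checked to be integrable along $\mathcal{L}^k$. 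My first attempt would be to show that the parameter conditions yield total decay of order $|\mathbf{u}|^{-\Re(c_i)}$ in each variable, combined with $|1-\langle\mathbf{u}\rangle|^{-\Re(a+k-\langle\mathbf{c}\rangle)}$; this is the same convergence that underlies Oshima's formula. If that estimate holds, dominated convergence justifies everything.

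The inner integrals over $[0,1]$ and $E_k$ cause no trouble, since those measures are finite and the integrands there are entire series with uniformly dominated terms. The remaining parameter restrictions, $\Re(g)>\Re(h)>0$, $\Re(\mathbf{h})>\mathbf{0}$ and $\Re(g)>\Re(\langle\mathbf{h}\rangle)$, are exactly those required by \eqref{IntegralR-e} and \eqref{Prop-Phi-Proof-2}.
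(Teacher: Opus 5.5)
Your proposal is correct and follows the paper's proof: the paper obtains $\Psi_2^{(k)}[a;\mathbf{c};\mathbf{x}]=\int_{\mathcal{L}^k}\mathrm{e}^{\langle\mathbf{x}/\mathbf{u}\rangle}\,\mathrm{d}\widetilde{\mu}_{(\mathbf{c}-\mathbf{1},a+k-\langle\mathbf{c}\rangle)}(\mathbf{u})$ from \eqref{MomentIntegral-ContourV}, multiplies the two representations, and substitutes \eqref{IntegralR-e} and \eqref{Prop-Phi-Proof-2}, just as you do. The integrability estimate you left conditional does hold. On $\mathcal{L}^k$ each $u_i$ and $1-\langle\mathbf{u}\rangle$ has real part $\frac{1}{k+1}$, so their arguments stay in $(-\pi/2,\pi/2)$ and $|1-\langle\mathbf{u}\rangle|\ge\frac{1}{k+1}$. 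Hence the factor $(1-\langle\mathbf{u}\rangle)^{-(a+k-\langle\mathbf{c}\rangle)}$ is bounded, and the density is dominated by a constant times $\prod_i|u_i|^{-\Re(c_i)}$, which is integrable because $\Re(c_i)>1$.
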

\begin{proof}
	In view of \eqref{MomentIntegral-ContourV}, we have immediately
	\[
	\Psi_{2}^{(k)}\big[a;\mathbf{c};\mathbf{x}\big]=\int_{\mathcal{L}^k}
	\mathrm{e}^{\langle\frac{\mathbf{x}}{\mathbf{u}}\rangle}
	\mathrm{d}\widetilde{\mu}_{(\mathbf{c}-\mathbf{1},a+k-\langle\mathbf{c}\rangle)}(\mathbf{u}),
	\]
	where $\Re(\mathbf{c}-\mathbf{1})>\mathbf{0}$ and $\Re(a)+k>\Re(\langle\mathbf{c}\rangle)$. Thus we can write
	\begin{equation}\label{Prop-Psi-Proof-1}
		\Psi_{2}^{(k)}\big[a;\mathbf{c};\mathbf{x}\big]
		\Psi_2^{(k)}\big[b;\mathbf{d}; \mathbf{y}\big]
		=\int_{\mathcal{L}^k}\int_{\mathcal{L}^k}
		\mathrm{e}^{\langle\frac{\mathbf{x}}{\mathbf{u}}+\frac{\mathbf{y}}{\mathbf{w}}\rangle}
		\mathrm{d}\widetilde{\mu}_{(\mathbf{c}-\mathbf{1},a+k-\langle\mathbf{c}\rangle)}(\mathbf{u})
		\mathrm{d}\widetilde{\mu}_{(\mathbf{d}-\mathbf{1},b+k-\langle\mathbf{d}\rangle)}(\mathbf{w}).
	\end{equation}
	Now the results follows by substituting in \eqref{Prop-Psi-Proof-1} the integral expressions
	\eqref{IntegralR-e} and \eqref{Prop-Phi-Proof-2}, respectively. 
\end{proof}

Humbert's multivariate Whittaker function $M_{\kappa, \bm{\mu}}$ is defined by (see \cite[p. 429, Eq. (13)]{Humbert-1920}; see also \cite[p. 35, Eq. (13)]{Srivastava-Karlsson-Book-1985} and \cite[p. 63, Eq. (15)]{Srivastava-Manocha-Book-1984}) 
	\[
	M_{\kappa,\bm{\mu}}(\mathbf{x}) = \mathbf{x}^{\bm{\mu}+\mathbf{\frac{1}{2}}}
	\mathrm{e}^{-\frac{1}{2}\langle\mathbf{x}\rangle}
	\Psi_2^{(k)}\left[\langle\bm{\mu}\rangle-\kappa + \tfrac{k}{2}; 2\bm{\mu}+\mathbf{1};\mathbf{x}\right].
	\]
Then from \eqref{Prop-Psi-2}, we obtain the following product formula for the multivariate Whittaker functions: 
	\begin{align*}
		M_{\kappa, \bm{\nu}}(\mathbf{x}) 
		M_{\kappa, \bm{\beta}}(\mathbf{y})
		&=\mathbf{x}^{\bm{\nu}+\mathbf{\frac{1}{2}}}
		\mathbf{y}^{\bm{\beta}+\mathbf{\frac{1}{2}}}
		\mathrm{e}^{-\frac{1}{2}\langle\mathbf{x}+\mathbf{y}\rangle}
		\int_{\mathcal{L}^k\times\mathcal{L}^k}\int_{E_k}
		\left(\frac{\mathbf{x}}{\mathbf{u}}\circ\mathbf{t}+\frac{\mathbf{y}}{\mathbf{w}}\circ\mathbf{t}\right)^{-\bm{\lambda}-\mathbf{\frac{1}{2}}}
		\mathrm{e}^{\frac{1}{2}\langle\frac{\mathbf{x}}{\mathbf{u}}\circ\mathbf{t}+\frac{\mathbf{y}}{\mathbf{w}}\circ\mathbf{t}\rangle}\\
		&\cdot M_{\kappa,\bm{\lambda}}\left(\frac{\mathbf{x}}{\mathbf{u}}\circ\mathbf{t}+\frac{\mathbf{y}}{\mathbf{w}}\circ\mathbf{t}\right)
		\mathrm{d}\widetilde{\mu}_{(2\bm{\nu},\frac{k}{2}-\langle\bm{\nu}\rangle-\kappa)}(\mathbf{u})
		\mathrm{d}\widetilde{\mu}_{(2\bm{\beta},\frac{k}{2}-\langle\bm{\beta}\rangle-\kappa)}(\mathbf{w})
		\mathrm{d}\mu_{(2\bm{\lambda}+\mathbf{1},-\langle\bm{\lambda}\rangle-\kappa-\frac{k}{2})}(\mathbf{t}),
	\end{align*}
where $\Re(\bm{\nu})>\mathbf{0}$, $\Re(\bm{\beta})>\mathbf{0}$, $\Re(2\bm{\lambda}+\mathbf{1})>\mathbf{0}$ and
\[
\Re(\kappa)<\min\Big\{0,\tfrac{k}{2}-\Re(\langle\bm{\nu}\rangle),\tfrac{k}{2}-\Re(\langle\bm{\beta}\rangle)\Big\}.
\]

\section{Product formulas for confluent hypergeometric function of matrix argument}\label{Section4}

For convenience, let us denote
\begin{equation}\label{Def-DirichletM-3}
\mathrm{d}\mu_{a,c-a}^{[m]}(U):=\frac{\Gamma_m(c)}{\Gamma_m(a)\Gamma_m(c-a)}
|U|^{a-\frac{1}{2}(m+1)}|I-U|^{c-a-\frac{1}{2}(m+1)}\mathrm{d}U 
\end{equation}
\[
\left(\Re(a)>\tfrac{1}{2}(m-1),~\Re(c-a)>\tfrac{1}{2}(m-1)\right), 
\]
where $I$ is the $m\times m$ identity matrix, $|X|$ means the determinant of $X$ and
\[
\Gamma_m(a):=\int_{X>O}|X|^{a-\frac{1}{2}(m+1)}\mathrm{e}^{-\mathrm{tr}(X)}\mathrm{d}X
\quad \left(\Re(a)>\tfrac{1}{2}(m-1)\right)
\]
is the matrix-variate Gamma function. Here, $\int_{X>O}$ means the integral over all symmetric positive definite matrices. By using the matrix-variate Beta function
\begin{align*}
B_{m}(a,b)
&=\int_{O<U<I}|U|^{a-\frac{1}{2}(m+1)}|I-U|^{b-\frac{1}{2}(m+1)}\mathrm{d}U\notag\\
&=\frac{\Gamma_m(a)\Gamma_m(b)}{\Gamma_m(a+b)}\quad\left(\min\{\Re(a),\Re(b)\}>\tfrac{1}{2}(m-1)\right),
\end{align*}
we have 
\[
\int_{O<U<I}\mathrm{d}\mu_{a,c-a}^{[m]}(U)=1,
\]
where $O<U<I$ means $U>O$ and $I-U>O$. For a more systematic introduction to special functions of matrix argument, we refer the interested reader to \cite{Mathai-Book-1997} or \cite{Muirhead-Book-1982}. The derivation below requires only very little theoretical material. Note that \eqref{Def-DirichletM-3} is a matrix-variate analogue of \eqref{Def-DirichletM-1}, and when $m=1$ we write $\mathrm{d}\mu_{a,c-a}^{[1]}(u)\equiv \mathrm{d}\mu_{a,c-a}(u)$.

The following result provides a matrix-variate generalization of Erd\'{e}lyi's integral \eqref{Prop-Erdelyi1939-1}. 

\begin{proposition}\label{Prop-MatrixVariate1F1}
	Let $X,Y\in\mathrm{Sym}_m(\mathbb{R})$ and $X,Y>O$, and let 
	\[
	\min\left\{\Re(a), \Re(b), \Re(g), \Re(c-a), \Re(b-d), \Re(h-g)\right\}>\tfrac{1}{2}(m-1).
	\]
	Then we have
	\begin{align}\label{Prop-MatrixVariate1F1-1}
		{}_{1}F_{1}\left[\begin{matrix}
			a\\
			c
		\end{matrix};X\right]
		{}_{1}F_{1}\left[\begin{matrix}
			b\\
			d
		\end{matrix};Y\right]\notag 
		&=\int_{O<U<I}\int_{O<V<I}\int_{O<T<I}
		{}_{1}F_{1}\left[\begin{matrix}
			h\\
			g
		\end{matrix}; (U^{1/2}XU^{1/2}+V^{1/2}YV^{1/2})T\right]\\
		&\hspace{4.5cm}\cdot\mathrm{d}\mu_{a,c-a}^{[m]}(U)
		\mathrm{d}\mu_{b,d-b}^{[m]}(V)
		\mathrm{d}\mu_{g,h-g}^{[m]}(T).
	\end{align}
\end{proposition}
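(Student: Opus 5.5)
The plan is to follow the second proof of Proposition \ref{Prop-Erdelyi1939}, replacing each scalar ingredient by its matrix-variate analogue. Two facts are needed. The first is the matrix Euler integral
\[
{}_{1}F_{1}\left[\begin{matrix} a\\ c\end{matrix};X\right]=\int_{O<U<I}\mathrm{etr}(XU)\,\mathrm{d}\mu^{[m]}_{a,c-a}(U)
\qquad\left(\Re(a),\Re(c-a)>\tfrac12(m-1)\right),
\]
where $\mathrm{etr}(\cdot)=\mathrm{e}^{\mathrm{tr}(\cdot)}$. This is classical (Muirhead, Theorem 7.4.2) and is assumed here. The second is a matrix analogue of \eqref{IntegralR-e}, which has to be established.

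Applying the Euler integral to both factors gives
\[
{}_{1}F_{1}\left[\begin{matrix} a\\ c\end{matrix};X\right]{}_{1}F_{1}\left[\begin{matrix} b\\ d\end{matrix};Y\right]=\int_{O<U<I}\int_{O<V<I}\mathrm{etr}(Z)\,\mathrm{d}\mu^{[m]}_{a,c-a}(U)\,\mathrm{d}\mu^{[m]}_{b,d-b}(V),
\]
with $Z:=U^{1/2}XU^{1/2}+V^{1/2}YV^{1/2}$. Here cyclicity of the trace gives $\mathrm{tr}(XU)=\mathrm{tr}(U^{1/2}XU^{1/2})$, and similarly for $Y$ and $V$. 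The matrix $Z$ is symmetric, and it is positive definite because $X,Y>O$. (I read the hypothesis $\Re(b-d)$ as the intended $\Re(d-b)>\frac12(m-1)$, which is what $\mathrm{d}\mu^{[m]}_{b,d-b}$ requires.)

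The matrix analogue of \eqref{IntegralR-e} to be proved is
\[
\mathrm{etr}(Z)=\int_{O<T<I}{}_{1}F_{1}\left[\begin{matrix} h\\ g\end{matrix};ZT\right]\mathrm{d}\mu^{[m]}_{g,h-g}(T),\qquad Z>O .
\]
Substituting this into the double integral above yields \eqref{Prop-MatrixVariate1F1-1}. To prove it, expand the integrand in zonal polynomials. Since $C_\kappa$ depends only on eigenvalues, $C_\kappa(ZT)=C_\kappa(T^{1/2}ZT^{1/2})$. The key moment formula is Constantine's beta integral for zonal polynomials (Muirhead, Theorem 7.2.10):
\[
\int_{O<T<I}C_\kappa(ZT)\,\mathrm{d}\mu^{[m]}_{g,h-g}(T)=\frac{[g]_\kappa}{[h]_\kappa}\,C_\kappa(Z).
\]
Integrating term by term, the factor $[h]_\kappa/[g]_\kappa$ from the series cancels this moment. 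What remains is $\sum_{\ell}\frac{1}{\ell!}\sum_{\langle\kappa\rangle=\ell}C_\kappa(Z)=\frac{1}{\ell!}(\mathrm{tr}\,Z)^\ell$ summed over $\ell$, which is $\mathrm{etr}(Z)$. This step uses the standard identity $\sum_{\langle\kappa\rangle=\ell}C_\kappa(Z)=(\mathrm{tr}\,Z)^\ell$.

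The main obstacle I expect is justifying the interchange of summation and integration, first in the $T$-integral and then inside the $U,V$ integrals. I would handle it as follows. For real parameters, every $C_\kappa(T^{1/2}ZT^{1/2})$ is nonnegative because its argument is positive semidefinite, so Tonelli's theorem applies to the series with coefficients $|[h]_\kappa/[g]_\kappa|$. For complex parameters, I would dominate $|[h]_\kappa/[g]_\kappa|$ by a constant times a ratio of the same form with real parameters (or a polynomial in $\ell$). Combined with $C_\kappa(T^{1/2}ZT^{1/2})\le C_\kappa(Z)$ for $O<T<I$ and $\sum_{\langle\kappa\rangle=\ell} C_\kappa(Z)=(\mathrm{tr}Z)^\ell$, this gives an integrable dominating series, which is finite since $\mathrm{d}\mu^{[m]}$ has finite total variation. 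The resulting bound is continuous in $U,V$ on the compact closure of the domain, so the outer Fubini step is routine.
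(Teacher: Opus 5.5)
Your proposal is correct and follows the paper's proof step for step. It uses the Euler integrals for both factors, the rewriting $\mathrm{tr}(XU)=\mathrm{tr}(U^{1/2}XU^{1/2})$ to obtain a positive definite argument $Z$, and the matrix analogue of \eqref{IntegralR-e} proved by zonal expansion with Constantine's beta integral and $\sum_{\langle\kappa\rangle=\ell}C_\kappa(Z)=(\mathrm{tr}\,Z)^\ell$. Your correction of $\Re(b-d)$ to $\Re(d-b)$ is right, and your domination argument for the termwise integration supplies a justification the paper leaves implicit.
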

\begin{proof}
	Recall that \cite[p. 770, Eq. (35.6.4)]{NIST Handbook}
	\[
	{}_{1}F_{1}\left[\begin{matrix}
		a\\
		c
	\end{matrix};X\right]
	=\int_{O<U<I}\mathrm{e}^{\mathrm{tr}(XU)}
	\mathrm{d}\mu_{a,c-a}^{[m]}(U)\quad
	\]
	\[
	\left(\min\{\Re(a),\Re(c-a)\}>\tfrac{1}{2}(m-1), ~ X\in\mathrm{Sym}_m(\mathbb{R})\right). 
	\]
	We have
	\begin{equation}\label{Prop-MatrixVariate1F1-Proof-1}
	{}_{1}F_{1}\left[\begin{matrix}
		a\\
		c
	\end{matrix};X\right]
	{}_{1}F_{1}\left[\begin{matrix}
		b\\
		d
	\end{matrix};Y\right]
	=\int_{O<U<I}\int_{O<V<I}\mathrm{e}^{\mathrm{tr}(XU+YV)}
	\mathrm{d}\mu_{a,c-a}^{[m]}(U)
	\mathrm{d}\mu_{b,d-b}^{[m]}(V), 
	\end{equation}
	where $X,Y\in\mathrm{Sym}_m(\mathbb{R})$ and 
	\[
	\min\left\{\Re(a), \Re(b), \Re(c-a), \Re(d-b)\right\}>\tfrac{1}{2}(m-1).
	\]
	
	Next, we need the following integral 
	\begin{equation}\label{Prop-MatrixVariate1F1-Proof-2}
		\mathrm{e}^{\mathrm{tr}X}
		=\int_{O<T<I}
		{}_{1}F_{1}\left[\begin{matrix}
			h\\
			g
		\end{matrix}; XT\right]\mathrm{d}\mu_{g,h-g}^{[m]}(T),
	\end{equation}
	where $X\in\mathrm{Sym}_m(\mathbb{R})$ and $X>O$. To prove \eqref{Prop-MatrixVariate1F1-Proof-2}, we shall make use of the relation \cite[p. 769, Eq. (35.4.6)]{NIST Handbook}
	\[
	\sum_{\langle\kappa\rangle=\ell}C_{\kappa}(X)=\left(\mathrm{tr}(X)\right)^{\ell} 
	\]
	and the integral (see \cite[p. 769, Eq. (35.4.9)]{NIST Handbook} and \cite[p. 254, Theorem 7.2.10]{Muirhead-Book-1982})
	\[
	\int_{O<X<I} C_{\kappa}(XT)\mathrm{d}\mu_{a,b}^{[m]}(T)
	=\frac{[a]_\kappa}{[a+b]_\kappa}C_\kappa(X)
	\]
	\[
	\left(\min\left\{\Re(a),\Re(b)\right\}>\tfrac{1}{2}(m-1), ~ X\in\mathrm{Sym}_m(\mathbb{R}), ~ X>O\right).
	\]
	We have 
	\begin{align*}
		\int_{O<T<I}
		{}_{1}F_{1}\left[\begin{matrix}
			h\\
			g
		\end{matrix}; XT\right]\mathrm{d}\mu_{g,h-g}^{[m]}(T)
		&=\sum_{\ell=0}^{\infty}\frac{1}{\ell!}\sum_{\langle\kappa\rangle=\ell}\frac{[h]_{\kappa}}{[g]_{\kappa}}\int_{O<T<I}C_{\kappa}(XT)\mathrm{d}\mu_{g,h-g}^{[m]}(T)\\
		&=\sum_{\ell=0}^{\infty}\frac{1}{\ell!}\sum_{\langle\kappa\rangle=\ell}C_{\kappa}(X)\\
		&=\sum_{\ell=0}^{\infty}\frac{1}{\ell!}\left(\mathrm{tr}(X)\right)^\ell
		=\mathrm{e}^{\mathrm{tr}(X)}.
	\end{align*}
	
	Now, we want to express $\mathrm{e}^{\mathrm{tr}(XU+YV)}$ as an integral by using \eqref{Prop-MatrixVariate1F1-Proof-2}. Since $XU+YV$ may not be a positive definite matrix, some adjustment is necessary. As $U$ and $V$ are positive definite, both $U^{1/2}$ and $V^{1/2}$ exists and are also positive definite. Thus, we have
	\[
	\mathrm{tr}(XU)=\mathrm{tr}(U^{1/2}XU^{1/2})
	\]
	and
	\[
	\mathrm{tr}(YV)=\mathrm{tr}(V^{1/2}YV^{1/2}),
	\]
	where $U^{1/2}XU^{1/2}$ and $V^{1/2}YV^{1/2}$ are all positive definite matrices. Since the sum of two positive definite matrices is again positive definite, it follows that $U^{1/2}XU^{1/2}+V^{1/2}YV^{1/2}$ is also positive definite. So we can write
	\begin{align}\label{Prop-MatrixVariate1F1-Proof-3}
		\mathrm{e}^{\mathrm{tr}(XU+YV)}
		&=\mathrm{e}^{\mathrm{tr}(U^{1/2}XU^{1/2}+V^{1/2}YV^{1/2})}\notag\\
		&=\int_{O<T<I}
		{}_{1}F_{1}\left[\begin{matrix}
			h\\
			g
		\end{matrix}; (U^{1/2}XU^{1/2}+V^{1/2}YV^{1/2})T\right]\mathrm{d}\mu_{g,h-g}^{[m]}(T).
	\end{align}
	By substituting \eqref{Prop-MatrixVariate1F1-Proof-3} into \eqref{Prop-MatrixVariate1F1-Proof-1}, we arrive at the desired result \eqref{Prop-MatrixVariate1F1-1}.
\end{proof}

We conclude this section by mentioning that functions $\Phi_2^{(k)}$ and $\Psi_2^{(k)}$, respectively defined by \eqref{ConfluentLauricella-Phi} and \eqref{ConfluentLauricella-Psi}, also have natural matrix-variate generalizations (see \cite{Mathai-Book-1997}). Thus, one could also extend the integrals in Proposition \ref{Prop-Phi} to matrix-variate cases. The details are left to the interested reader.

\section*{Funding}

No funding was received for conducting this study.

\section*{Contributions}

Both authors contributed equally to the preparation of this paper.

\section*{Conflicts of interest}

The authors declare that there is no conflict of interest.

\section*{Data Availability}

This manuscript has no associated data.




\end{document}